\definecolor{webgreen}{rgb}{0,.5,0}
\definecolor{webbrown}{rgb}{.6,0,0}
\theoremstyle{plain}
\newtheorem{theorem}{Theorem}
\newtheorem{lemma}[theorem]{Lemma}
\newtheorem{corollary}[theorem]{Corollary}
\theoremstyle{definition}
\newtheorem{definition}[theorem]{Definition}
\newtheorem{example}[theorem]{Example}
\newtheorem{remark}[theorem]{Remark}
\title{Enumerating Anchored Permutations \\ with Bounded Gaps}
\author{
Maria M.\ Gillespie \thanks{email: \texttt{maria.gillespie@colostate.edu}; Corresponding author; Supported by NSF grant PDRF 1604262.} \\
Colorado State University \\
Fort Collins, CO 80523
\and 
Kenneth G.\ Monks \thanks{email: \texttt{monks@scranton.edu}} \\
University of Scranton \\ Scranton, PA 18510
\and
Kenneth M.\ Monks \thanks{email: \texttt{kenneth.monks@frontrange.edu}} \\
Front Range Community College \\ Longmont, CO 80501
}
\date{\today}
\begin{document}

\renewcommand{\thefootnote}{\fnsymbol{footnote}}
\begin{NoHyper}\footnotetext{\emph{Keywords}: Permutation patterns, rational generating functions, algebraic graph theory, spectral graph theory.}\end{NoHyper}
\renewcommand{\thefootnote}{\arabic{footnote}}

\maketitle

\vskip .2 in

\baselineskip14pt

\begin{abstract}
  Say that a permutation of $1,2,\ldots,n$ is \textit{$k$-bounded} if every pair of consecutive entries in the permutation differs by no more than $k$.  Such a permutation is \textit{anchored} if the first entry is $1$ and the last entry is $n$.   We show that the generating function for the enumeration of $k$-bounded anchored permutations is always rational, mirroring the known result on (non-anchored) $k$-bounded permutations due to Avgustinovich and Kitaev.  We then explicitly determine the recursive formulas of minimal depth for the number of anchored $k$-bounded permutations of $n$ for $k=2$ and $k=3$, resolving a conjecture listed on the Online Encyclopedia of Integer Sequences (entry A249665).
  We additionally show that the number of anchored $k$-bounded permutations of $n$ is asymptotically $O\left(k^n\right)$ as a function of $n$ for a given $k$.
\end{abstract}

\section{Introduction}

Suppose one starts on the first stair of a staircase with $n$ steps labeled $1,\ldots,n$ in order, and at each step one either steps forwards or backwards by at most $k$ steps, such that every stair is used exactly once and the climb ends on the $n$th stair.  How many distinct such ways are there to climb the stairs?

This question can be stated more precisely as follows.  For a positive integer $k$, define a \textbf{$k$-bounded} permutation of $[n]=\{1,2,\ldots,n\}$ to be a bijection $\pi:[n]\to [n]$ such that for all $i\in \{1,2,\ldots,n-1\}$ we have $$\left|\pi(i)-\pi(i+1)\right|\leq k.$$  We say that a permutation is \textbf{anchored} if $\pi(1)=1$ and $\pi(n)=n$.  We are interested in enumerating the $k$-bounded anchored permutations in terms of $k$ and $n$.

\begin{example}
  The permutation $1,4,2,3,6,5,7,8,9$ is a $3$-bounded anchored permutation of $\{1,2,\ldots,9\}$, since the first entry is $1$, the last entry is $9$, and no pair of consecutive entries differs by more than $3$.
\end{example}

Several related questions have been previously explored.  Positive stair climbing problems were studied by Goins and Washington \cite{Standard-stair-climbing}, extending the well-known fact that the number of ways to climb a staircase of length $n$ using positive steps of $+1$ or $+2$ each time is the $n$th Fibonacci number.  

Avgustinovich and Kitaev \cite{AK} studied \emph{$k$-determined permutations}, which they show are equivalent to (the inverses of) $(k-1)$-bounded, \textit{non-anchored} permutations, as well as certain Hamiltonian paths in graphs.  They resolve a conjecture of Plouffe \cite{Plouffe} by providing the generating function for $2$-bounded non-anchored permutations, which were originally defined as \textit{key permutations} \cite{Page}.  Avgustinovich and Kitaev further show that the generating function of the $k$-determined permutations of $n$ is always rational for any $k$, using the transfer-matrix method described by Stanley \cite[ch.\ 4]{Stanley}.  

In this paper, we show that the generating function is still rational for \textit{anchored} $k$-bounded permutations, for any $k$. Furthermore, we resolve  the question of finding a minimal-depth linear recurrence for the enumeration of $3$-bounded anchored permutations, which was originally posed as a conjecture on the Online Encyclopedia of Integer Sequences, entry A249665 \cite{OEIS}.

\subsection{Main results}

Let $F_n^{(k)}$ be the number of $k$-bounded anchored permutations of $[n]$.  For $k=1$, there is clearly only one $1$-bounded anchored permutation for each $n$, namely the identity permutation.  In this paper, we resolve the cases $k=2$ and $k=3$ completely by finding recursions for $F^{(2)}_n$ and $F^{(3)}_n$.  We also adapt the methods in \cite{AK} to show that the generating function $$F^{(k)}(x)=\sum F_n^{(k)}x^n$$ of $k$-bounded anchored permutations is rational for any $k$, which in particular implies that there always exists a finite-depth homogeneous linear recurrence relation for enumerating these permutations.  We then apply techniques from algebraic graph theory to obtain an asymptotic upper bound for $F_n^{(k)}$.

Our main results are summarized in the following theorems. 

\begin{theorem}\label{thm:rational}
    The generating function $F^{(k)}(x)$ is a rational function, that is, $F^{(k)}(x)\in \mathbb{Z}(x)$.
\end{theorem}

Due to standard results on rational generating functions (see \cite[ch.\ 4]{Stanley}), we immediately obtain the following corollary.

\begin{corollary}
  For any fixed $k$, the numbers $F_n^{(k)}$ satisfy a finite-depth homogeneous linear recurrence in $n$ with integer coefficients.
\end{corollary}

The proof of Theorem 4 relies on the transfer-matrix method, giving an expression for the rational function that is explicit, but may not be a reduced fraction.  It therefore gives an explicit recursion for $F_n^{(k)}$ for all $k$, but this recursion may not be the minimal depth recursion.  We give the reduced generating function and minimal recursion for $k=2$ and $k=3$ in the next two theorems.

For simplicity, we define $R_n=F_n^{(2)}$ and $F_n=F_n^{(3)}$ in the theorems below. 

\begin{theorem}\label{thm:2-bounded}
  Let $R_n$ be the number of $2$-bounded anchored permutations of $[n]$.  Then the sequence $\left(R_n\right)_{n\geq1}$ is given by the recurrence $R_1=1$, $R_2=1$, $R_3=1$, and 
    \begin{equation}\label{eqn:recursion2}
      R_n=R_{n-1}+R_{n-3}
    \end{equation} 
  for all $n\geq 4$.  The generating function of the sequence is   
    $$R(x)=\sum_{n=1}^\infty R_n x^n=\frac{x}{1-x-x^3}.$$   
\end{theorem}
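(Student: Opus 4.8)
The plan is to establish the recurrence \eqref{eqn:recursion2} by a direct combinatorial decomposition of the set of $2$-bounded anchored permutations according to the second entry, and then to derive the generating function by routine algebra. The base cases $R_1 = R_2 = R_3 = 1$ are immediate, since an anchored permutation of $[n]$ is forced for $n \le 3$: the only possibilities are $(1)$, $(1,2)$, and $(1,2,3)$.

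For the inductive step, fix $n \ge 4$ and let $\pi$ be a $2$-bounded anchored permutation of $[n]$. Since $\pi(1) = 1$ and $|\pi(1) - \pi(2)| \le 2$, the second entry satisfies $\pi(2) \in \{2, 3\}$, which gives a disjoint partition into two cases. If $\pi(2) = 2$, then deleting the initial entry $1$ and subtracting $1$ from every remaining value yields a $2$-bounded anchored permutation of $[n-1]$; this correspondence is a bijection, so these permutations are counted by $R_{n-1}$.

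The crux is the case $\pi(2) = 3$, where I claim $\pi$ must begin with the prefix $(1, 3, 2, 4)$. First I would locate the value $2$. Since $n \ge 4$ we have $\pi(n) = n \neq 2$, and positions $1$ and $2$ hold $1$ and $3$, so $2$ occupies an interior position and has two neighbors, each within distance $2$ of $2$ and hence drawn from $\{1, 3, 4\}$. The value $2$ cannot be adjacent to $1$ (that would force it into position $2$, which holds $3$), so both neighbors of $2$ lie in $\{3, 4\}$; since $3$ sits at position $2$, this pins $2$ to position $3$ with right-neighbor $4$. I expect this forcing argument to be the principal obstacle, as it requires carefully ruling out the alternative placements of $2$ and confirming that the entry $4$ is forced. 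Granting the prefix $(1,3,2,4)$, the entries $\pi(4), \ldots, \pi(n)$ form a $2$-bounded permutation of $\{4, \ldots, n\}$ beginning at $4$ and ending at $n$; subtracting $3$ gives a bijection onto the $2$-bounded anchored permutations of $[n-3]$, counted by $R_{n-3}$. Summing the two disjoint cases yields $R_n = R_{n-1} + R_{n-3}$.

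Finally, to obtain the generating function I would multiply through by $1 - x - x^3$: the recurrence makes the coefficient of $x^n$ vanish for every $n \ge 4$, while a direct check of the initial terms gives $R_2 - R_1 = 0$ and $R_3 - R_2 = 0$, leaving $(1 - x - x^3)\,R(x) = R_1 x = x$. Hence $R(x) = x / (1 - x - x^3)$, as claimed.
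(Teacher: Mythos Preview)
Your proof is correct and follows essentially the same approach as the paper: split on whether $\pi(2)=2$ or $\pi(2)=3$, show that in the latter case the prefix $(1,3,2,4)$ is forced, and derive the generating function from the recurrence. The only cosmetic difference is that the paper first proves a full structure lemma (describing every $2$-bounded anchored permutation via a subset $I$) and invokes it to force the prefix, whereas you give a direct neighbor argument for the value $2$; the paper itself remarks that such a weaker argument suffices for the recurrence.
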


This sequence $R_n$ is also known as \textit{Narayana's cows sequence} \cite{OEIS-Narayana-Cows}, and the particular interpretation as $2$-bounded anchored permutation is stated without proof (in a slightly different but equivalent form) in Flajolet and Sedgewick \cite[p.\ 373]{Flajolet}. Note the similarity to the Fibonacci recurrence.  It is interesting that for steps of $+1$ and $+2$ only, the recurrence is precisely the Fibonacci sequence, and here, with the added steps of $-1$ and $-2$ where every step is reached, it is one index off of the Fibonacci recurrence.

  We provide two separate proofs of Theorem \ref{thm:2-bounded}.  The first uses the transfer-matrix method used to prove Theorem \ref{thm:rational}, which we include in order to illustrate the method.  The second is a direct, elegant combinatorial proof. 

\begin{theorem}\label{thm:3-bounded}
  Let $F_n$ be the number of $3$-bounded anchored permutations of $[n]$.  Then the sequence $\left(F_n\right)_{n\geq1}$ is given by the recurrence $F_1=1$, $F_2=1$, $F_3=1$, $F_4=2$, $F_5=6$, $F_6=14$, $F_7=28$, $F_8=56$, and 
    \begin{equation}\label{eqn:recursion3}
      F_n=2F_{n-1}-F_{n-2}+2F_{n-3}+F_{n-4}+F_{n-5}-F_{n-7}-F_{n-8}
    \end{equation}
  for all $n\geq 9$.  The generating function of the sequence is
    $$F(x)=\frac{x-x^2-x^4}{1-2x+x^2-2x^3-x^4-x^5+x^7+x^8}.$$ 
\end{theorem}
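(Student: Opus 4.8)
The plan is to realize each $3$-bounded anchored permutation of $[n]$ as a Hamiltonian path from vertex $1$ to vertex $n$ in the graph $\mathcal{G}_n$ with vertex set $[n]$ and an edge joining $i,j$ exactly when $0<|i-j|\le 3$. Reading off $\pi(1),\pi(2),\ldots,\pi(n)$ as the successive vertices of a walk makes this a bijection between the permutations counted by $F_n$ and Hamiltonian $1$-to-$n$ paths in $\mathcal{G}_n$; here $1$ and $n$ are precisely the two degree-$1$ endpoints of the path, while every other vertex has degree $2$. This is the same viewpoint in which Avgustinovich and Kitaev \cite{AK} apply the transfer-matrix method \cite[ch.\ 4]{Stanley} to the \emph{non-anchored} problem, and I would adapt their framework to the anchored case by fixing the two endpoints.

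Next I would set up the transfer matrix via a left-to-right sweep through the values $1,2,\ldots,n$. Cutting between $m$ and $m+1$, any path edge crossing the cut has its lower endpoint $a$ and upper endpoint $b$ with $b-a\le 3$, forcing $a\in\{m-2,m-1,m\}$ and $b\in\{m+1,m+2,m+3\}$; hence the interaction across the cut is controlled by a bounded amount of local data. I would define a finite set of \emph{boundary profiles} recording, for the three lower boundary vertices, how many crossing half-edges each carries, together with the connectivity data specifying which dangling half-edges already belong to a common path segment. Let $T$ be the matrix whose $(s,s')$ entry counts the ways to introduce vertex $m+1$ together with all its incident edges, passing from profile $s$ to profile $s'$, subject to the degree bounds and to never closing a cycle. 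Then $F_n=\mathbf{u}^{\top}T^{\,n-c}\mathbf{v}$ for fixed boundary vectors $\mathbf{u},\mathbf{v}$ that encode the anchoring (degree $1$ at vertex $1$ and at vertex $n$, and no crossing edges beyond the two ends).

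From $T$ the rest is formal. The ordinary generating function equals a rational function whose denominator divides $\det(I-xT)$, so matching the claimed denominator $1-2x+x^2-2x^3-x^4-x^5+x^7+x^8$ identifies the relevant $8$-dimensional invariant behavior, and Cayley--Hamilton applied to the corresponding minimal polynomial yields the order-$8$ recurrence \eqref{eqn:recursion3}. I would then confirm the eight initial values $F_1,\ldots,F_8$ by direct enumeration, which pins down the unique solution of the recurrence. Equivalently, if one prefers a self-contained derivation, the same content can be organized as a system of linear recurrences among auxiliary counts of partial permutations classified by their boundary profile, from which \eqref{eqn:recursion3} is obtained by eliminating the auxiliary variables.

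The main obstacle is the combinatorial bookkeeping involved in building $T$ correctly: enumerating the connectivity profiles, guaranteeing that no transition prematurely closes a cycle (so the final object is a single path rather than a disjoint union of paths and cycles), ensuring every internal vertex finishes with degree exactly $2$, and faithfully imposing the two degree-$1$ anchor constraints at the very start and end of the sweep. The subsequent elimination or Cayley--Hamilton step is routine, but I expect the delicate point to be verifying that the resulting characteristic polynomial is \emph{exactly} the claimed degree-$8$ denominator with no spurious factors, which is the decisive check that the state space and transitions were set up without error.
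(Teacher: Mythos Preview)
Your transfer-matrix strategy is sound in principle and takes a genuinely different route from the paper. The paper works directly with the combinatorial structure: it isolates a special ``Joker'' pattern $3,1,4,2,5$ and proves a structure lemma showing that whenever an anchored permutation leaves a diagonal point with a $+3$ gap, it must either trace the Joker or follow a ``cascading $3$-pattern'' of alternating runs of $\pm 3$ gaps. Guided by this, the paper introduces two auxiliary sequences $G_n$ (permutations of $[n]$ starting at $1$ or $2$, ending at $n$) and $H_n$ (starting at $3$, ending at $n$, not beginning with the Joker), establishes by explicit casework a coupled system of three linear recurrences among $F_n,G_n,H_n$, eliminates $H_n$, and then solves the resulting $2\times 2$ system of generating-function equations to obtain $F(x)$ and hence the order-$8$ recurrence. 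Your approach trades this structural insight for systematicity: a correctly built transfer matrix would generalize immediately to arbitrary $k$ and would in fact settle the rationality conjecture the paper poses in its final section.

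That said, what you have written is a plan, not a proof. You have not enumerated the boundary profiles, constructed $T$, or verified that the minimal polynomial of $T$ on the subspace generated by your start vector is exactly the claimed degree-$8$ denominator; you yourself flag this last step as ``the decisive check.'' Without it, nothing in the proposal distinguishes the true denominator from a proper factor or a larger one carrying spurious roots, and the specific recurrence \eqref{eqn:recursion3} is not established. The paper's argument, by contrast, is fully executed: the two explicit generating-function equations it derives pin down $F(x)$ uniquely, and the initial values fall out of the same casework.
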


Lastly, we determine an upper bound for the Perron-Frobenius eigenvalue of the transfer matrix mentioned above.  We use this to obtain an asymptotic bound for $F_n^{(k)}$.

\begin{theorem}\label{thm:asymptotics}
  For any $k$, we have that $F_n^{(k)}$ is $O\left(k^n\right)$ as $n\to \infty$.
\end{theorem}

We prove Theorems  \ref{thm:rational}, \ref{thm:2-bounded}, \ref{thm:3-bounded}, and \ref{thm:asymptotics} in Sections \ref{sec:rational}, \ref{sec:2-bounded},  \ref{sec:3-bounded}, and \ref{sec:asymptotics} respectively. 

\section{Rationality of the generating function for all \texorpdfstring{$k$}{k}}\label{sec:rational}

We now prove Theorem \ref{thm:rational}.  To do so, we first recall the basic well-known notions of permutation patterns, as well as important definitions and facts from \cite{AK}.  Throughout this section we write $S_m$ for the set of all permutations of $\{1,2,\ldots,m\}$.  We also fix positive integers $k$ and $n$ with $k\le n$ throughout.

\begin{definition}
  A \textbf{consecutive pattern} of length $k$ of a permutation $\pi\in S_n$ is a permutation $\tau\in S_k$ for which the relative order of the entries $\tau(1),\ldots,\tau(k)$ exactly matches that of $k$ consecutive entries $\pi(i+1),\pi(i+2)\cdots,\pi(i+k)$ of $\pi$ for some $i\in\{0,1,\ldots,n-k\}$.  
\end{definition}

For instance, the consecutive patterns of length $3$ of the permutation $51432$ are $312$, $132$, and $321$ (from left to right).  We can think of these consecutive patterns as defining a path in the following graph defined in \cite{AK}, which is similar to the well-known \textit{DeBruijn graphs}.

\begin{definition}[\cite{AK}]
   The \textbf{graph of pattern overlaps of length $k$} is the directed graph $\mathcal{P}_k$ whose vertex set is $S_k$ and for which there is a directed edge from $\tau$ to $\sigma$ if and only if the pattern of $\tau(2),\ldots,\tau(k)$ equals the pattern of $\sigma(1),\ldots,\sigma(k-1)$.
\end{definition}

If $k\le n$, then any permutation $\pi$ of $\{1,2,\ldots,n\}$ corresponds to a unique path in $\mathcal{P}_k$, given by considering the sequence of consecutive patterns of length $k$ in $\pi$ from left to right.  For example, the path corresponding to $51432$ in $\mathcal{P}_3$ is $$312\longrightarrow 132 \longrightarrow 321.$$ However, not all paths determine a unique permutation, as there may be more than one permutation corresponding to the same path. For instance, $52431$ has the same path as the example $51432$ above. 
This naturally leads one to consider the following definition.

\begin{definition}[\cite{AK}]
  A permutation $\pi$ is \textbf{$k$-determined} if its corresponding path in the graph $\mathcal{P}_k$ uniquely determines $\pi$.
\end{definition}

Note that the permutations $51432$ and $52431$ from our example above, which have the same path in $\mathcal{P}_3$, are not $2$-bounded.  It turns out that the condition of being $k$-bounded is equivalent to being $(k+1)$-determined, as follows.  

\begin{lemma}\label{lem:k-determined}
  Let $\pi\in S_n$.  The following are equivalent:
  \begin{itemize}
      \item $\pi$ is $(k+1)$-determined
      \item For all $x\in \{1,2,\ldots,n-1\}$, we have $|\pi^{-1}(x)-\pi^{-1}(x+1)|\le k$
      \item $\pi^{-1}$ is $k$-bounded.
  \end{itemize}
Moreover, $\pi^{-1}$ is anchored if and only if $\pi$ is anchored, and so the anchored $k$-bounded permutations of $n$ are in bijection with the anchored $(k+1)$-determined permutations.
\end{lemma}

\begin{proof}
  The equivalence of the first two statements was proven in Theorem 1 of \cite{AK}.  The remaining statements follow immediately from the definitions of $k$-bounded and anchored.
\end{proof}

Lemma \ref{lem:k-determined} allows us to focus instead on the generating function of the anchored $(k+1)$-determined permutations of $n$.  In order to apply the transfer-matrix method, we first recall the notion of a \textbf{prohibited pattern}.

\begin{definition}{\cite{AK}}
   A \textbf{$k$-prohibited pattern} is a pattern of the form $xX(x+1)$ or $(x+1)Xx$ where $X$ is a permutation of $\{1,2,\ldots,|X|+2\}-\{x,x+1\}$ with $|X|\ge k$.
   
   Finally, for any $j$, define $\mathcal{P}_{j,k}$ to be the graph formed by deleting all vertices from $\mathcal{P}_j$ that contain a consecutive pattern that is $k$-prohibited.
\end{definition}

\begin{example}
     The permutation $52431$ exhibits the $2$-prohibited pattern $2431$.
\end{example}

  We say a prohibited pattern $P$ is \textbf{$k$-irreducible} if it has no consecutive patterns of length less than $|P|$ which are $k$-prohibited.  

\begin{lemma}[\cite{AK}]
  Any $k$-irreducible $k$-prohibited pattern has length at most $2k+1$.  Moreover, the $(k+1)$-determined permutations in $S_n$ correspond bijectively to paths of length $n-2k$ in the graph $\mathcal{P}_{2k+1,k}$.
\end{lemma}

We now give a further criterion to determine which paths in $\mathcal{P}_{2k+1,k}$ correspond to the \textit{anchored} $(k+1)$-determined permutations.

\begin{theorem}\label{thm:anchored}
  A $(k+1)$-determined permutation is anchored if and only if its corresponding path in $\mathcal{P}_{2k+1,k}$ starts at a vertex whose pattern starts with $1$ and ends at a vertex whose pattern ends with $2k+1$.
\end{theorem}

\begin{proof}
  The forward implication is clear: if a permutation in $S_n$ is anchored, with first entry $1$ and last entry $n$, then its first consecutive pattern must start with $1$ and its last consecutive pattern (of length $2k+1$) must end with $2k+1$.
  
  Conversely, let $\pi\in S_n$ be $(k+1)$-determined and suppose its first pattern of length $2k+1$ starts with $1$ and its last ends with $2k+1$.  Assume for contradiction that $\pi(1)>1$.  Let $x=\pi(1)$. Then due to the first pattern starting with $1$, the entry $x-1$ does not occur among the first $2k+1$ values of $\pi$.  But then $|\pi^{-1}(x-1)-\pi^{-1}(x)|\ge 2k+1>k$, which contradicts $(k+1)$-determinability by Lemma \ref{lem:k-determined}.  It follows that $\pi(1)=1$, and a similar argument shows that $\pi(n)=n$.  Thus $\pi$ is anchored.
\end{proof}

Theorem \ref{thm:rational} now follows using the transfer-matrix method.  In particular, Theorem 4.7.2 in \cite{Stanley} shows that the generating function for the number of paths of length $n-2k$ between any two fixed vertices in a finite directed graph (in this case $\mathcal{P}_{2k+1,k}$) is rational.  Summing the rational generating functions from all possibilities of valid starting and ending vertices given by Theorem \ref{thm:anchored} gives the desired result.

\section{Structure and enumeration for \texorpdfstring{$k=2$}{k=2} and \texorpdfstring{$k=3$}{k=3}}

We now use Theorem \ref{thm:anchored} to prove Theorem \ref{thm:2-bounded}, the recursion for $k=2$.  

In particular, set $k=2$ and define $A$ to be the adjacency matrix of the graph $\mathcal{P}_{2k+1,k}=P_{5,2}$, that is, the matrix having $A_{ij}=1$ if there is an edge from vertex $i$ to vertex $j$, and $A_{ij}=0$ otherwise.  Define $p_{ij}(n)$ to be the number of paths of length $n$ from vertex $i$ to vertex $j$ in the directed graph $\mathcal{P}_{5,2}$.  Then the transfer-matrix method states that $$G_{ij}(x):=\sum_{n=0}^\infty p_{ij}(n)x^n=\frac{(-1)^{i+j}\det(I-xA;j,i)}{\det(I-xA)}$$ where $\det(I-xA;j,i)$ is the determinant of the minor of the matrix $I-xA$ formed by deleting the $j$th row and $i$th column.  

Note that the sum of the generating functions $G_{ij}(x)$, over all starting vertices $i$ whose permutation starts with $1$ and over all ending vertices $j$ whose permutation ends with $5$, is precisely the generating function $$\sum_{n=0}^\infty R_{n+5}x^{n},$$ since the paths of length $n$ in $\mathcal{P}_{5,2}$ correspond to permutations of length $n+5$.
Using a computer, one easily finds that this summation equals $$(3+x+2x^2)/(1-x-x^3).$$ Since we can explicitly calculate that $R_0=0$, $R_1=R_2=R_3=1$, and $R_4=2$, we have that 
\begin{align*}
    \sum_{n=0}^\infty R_{n}x^n &= x+x^2+x^3+2x^4 + x^5(\sum_{n=0}^\infty R_{n+5}x^n) \\
    &= x+x^2+x^3+2x^4+x^5(3+x+2x^2)/(1-x-x^3) \\
    &= x/(1-x-x^3),
\end{align*}
as desired.

Surprisingly, the same computation for $k=3$ proved to be computationally intractable due to the significantly larger size of the adjacency matrix.  We therefore provide a direct combinatorial proof for the minimal depth recursion for $k=3$.  To do so, and to establish a combinatrial proof for $k=2$, we require some additional notation.

\subsection{Notation}

A \textbf{gap} of a permutation $\pi$ is a difference $\pi(i+1)-\pi(i)$ between two consecutive entries.  We will always write our gaps with a $+$ or $-$ sign in front to indicate the sign, even if the sign is clear, to distinguish gaps from entries.  For instance, we would say that the first gap of the permutation $1,3,2,4$ is $+2$, and the second gap is $-1$.  We sometimes refer to the gaps of a sequence that is not a permutation as well, defined in the same way as consecutive differences between entries. 

A sequence whose gaps are all between $-k$ and $+k$ is said to be \textbf{blocked} or \textbf{stuck} at the end if the last entry $a$ has the property that $a\pm 1, \ldots, a\pm k$ all either occur in the sequence or are less than or equal to $0$.  For instance, if $k=3$, the sequence $1,3,4,6,5,2$ is blocked at $2$; the next possible positive integer that has not been used is $7$, which is more than a gap of $k$ away.

The \textbf{graph} of a permutation of $\{1,\ldots,n\}$ is the plot of all points $(i,\pi(i))$ in the plane.  The \textbf{main diagonal} is the line with equation $y=x$.  Note that a point in the graph of a permutation is on the main diagonal if and only if it is a fixed point of the permutation.

\begin{figure}[H]
\begin{center}
\includegraphics{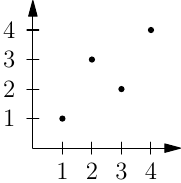}
\end{center}
\caption{The graph of the $2$-bounded permutation $1,3,2,4$.}
\end{figure}

\subsection{Combinatorial proof for \texorpdfstring{$k=2$}{k=2}}\label{sec:2-bounded}

We now give a purely combinatorial proof of Theorem \ref{thm:2-bounded}.  To get a handle on the $2$-bounded anchored permutations, we first prove the following lemma.  It is worth noting that a weaker version of the lemma suffices to prove recursion (\ref{eqn:recursion2}), but the stronger statement explicitly describes the structure of a $2$-bounded permutation.

\begin{lemma}\label{lem:structure2}
  Let $\pi$ be an anchored $2$-bounded permutation of $[n]$.  Then there exists a subset $I\subseteq \{2,\ldots,n-2\}$ such that 
  \begin{enumerate}
    \item Any pair of numbers in $I$ differ by at least three, and 
    \item For all $i\in [n]$, $$\pi(i)=\begin{cases} i+1, & \text{if $i\in I$;} \\ i-1, & \text{if $i-1\in I$;} \\ i, & \text{otherwise.}   \end{cases}$$
  \end{enumerate}
\end{lemma}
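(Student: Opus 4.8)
The plan is to analyze the local structure of a $2$-bounded anchored permutation $\pi$ directly from its graph, showing that the only possible behavior is to either stay on the diagonal (fixed points) or perform an isolated transposition of adjacent values $i \leftrightarrow i+1$. First I would observe that since $\pi(1)=1$ and $\pi(n)=n$, the graph of $\pi$ begins and ends on the main diagonal. The key claim to establish is that the graph can never stray more than distance one from the diagonal: precisely, that $|\pi(i) - i| \le 1$ for every $i$. Granting this claim, the conclusion follows quickly, because the only permutations with $|\pi(i)-i|\le 1$ for all $i$ are products of disjoint adjacent transpositions; setting $I = \{\, i : \pi(i) = i+1 \,\}$ then gives exactly the described piecewise formula, the disjointness of the transpositions forces any two elements of $I$ to differ by at least $3$ (two consecutive swaps $i\leftrightarrow i+1$ and $(i+1)\leftrightarrow(i+2)$ would conflict, and even $i$ and $i+2$ in $I$ would both require $i+1$ as a value), and the anchoring $\pi(1)=1$, $\pi(n)=n$ forces $I\subseteq\{2,\ldots,n-2\}$.

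The core of the argument is therefore the claim that $|\pi(i)-i|\le 1$ for all $i$. I would prove this by induction on $i$, tracking the set of values $\{\pi(1),\ldots,\pi(i)\}$ already used. The natural invariant is that after placing the first $i$ entries, the used values form an initial segment $\{1,\ldots,i\}$ possibly with $i$ replaced by $i+1$ — in other words, the "frontier" of unused values always starts at either $i+1$ or $i$. Because gaps are bounded by $2$, once a value $v$ has been placed, the next entry lies in $\{v-2,\ldots,v+2\}$, and I would argue that the permutation can never "leave a hole" two or more steps below the current position without becoming stuck, since a gap above $2$ would later be required to return and fill it. The cleanest phrasing is to show that if at some point $\pi$ has used a value but skipped a smaller unused value $m$ with $i - m \ge 2$, then $m$ can never be reached again without violating the $2$-bound, contradicting that $\pi$ is a permutation that must eventually use $m$.

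The main obstacle I expect is making the "no skipped holes" argument fully rigorous rather than merely intuitive, because the permutation is free to oscillate locally (e.g. the pattern $\ldots, i, i+2, i+1, i+3, \ldots$ stays within the diagonal band but moves forward by two). I would handle this by carefully choosing the induction statement to allow exactly one unit of slack above the diagonal: formally, I would prove that for each prefix, the set of unused values is $\{i+1, i+2, \ldots, n\}$ or $\{i, i+2, i+3, \ldots, n\}$, i.e. at most the single value $i$ can be "deferred" while $i+1$ is taken early. The $2$-bound is what prevents deferring two values at once, since returning across a gap of size $2$ to collect a value at distance $2$ uses up the only available long gap and immediately re-synchronizes the frontier. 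Once this refined invariant is verified step-by-step — checking each of the few admissible gaps $\pm 1, \pm 2$ against the current frontier — the bound $|\pi(i)-i|\le 1$ drops out, and with it the full structural description and both properties of $I$.
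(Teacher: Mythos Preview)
Your approach is sound and genuinely different from the paper's. The paper argues by strong induction on $n$: it locates the first index $i$ where $\pi(i)\ne i$, shows by a short blocked-sequence argument that the next three entries must be $i+1,\,i,\,i+2$, and then recurses on the anchored $2$-bounded permutation of $\{i+2,\ldots,n\}$. You instead run an induction on the position $i$, maintaining a frontier invariant on the set of used values (either $\{1,\ldots,i\}$ or $\{1,\ldots,i-1,i+1\}$), which amounts to a finite-state-machine argument. Both proofs ultimately rest on the same ``stuck'' observation, but yours packages it as a global invariant while the paper's is a local first-deviation analysis; your formulation is closer in spirit to the transfer-matrix viewpoint and would adapt more naturally to larger $k$, whereas the paper's is slightly more direct for this particular structural statement.

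One small correction: your parenthetical justification for why $i$ and $i+2$ cannot both lie in $I$ (``both require $i+1$ as a value'') is not right---the transpositions $(i,i+1)$ and $(i+2,i+3)$ are disjoint, and the piecewise formula is perfectly consistent as a permutation when $I=\{i,i+2\}$. What actually rules this out is $2$-boundedness itself: with $i,i+2\in I$ you would have $\pi(i+1)=i$ and $\pi(i+2)=i+3$, a gap of $+3$. Alternatively, your own state-machine invariant already forces $\pi(i+2)=i+2$ after a swap at position $i$ (the transition $B\to C\to A$), which yields the spacing $\ge 3$ directly without a separate argument.
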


In other words, the graph of the permutation can only deviate from the diagonal $x=y$ in consecutive pairs, with an up-step of $2$ and a down-step of $1$, before returning to the diagonal with an up-step of $2$.  (See Figure \ref{fig:2-bounded}.)

\begin{figure}[t]
\begin{center}
\includegraphics{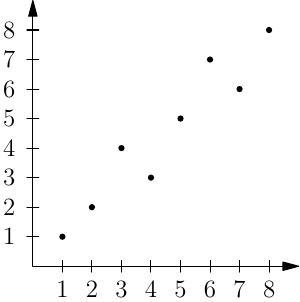}
\end{center}
\caption{\label{fig:2-bounded} The $2$-bounded permutation graphed above, $1,2,4,3,5,7,6,8$, has subset $I=\{3,6\}$ as the set of indices $i$ for which $\pi(i)=i+1$.}
\end{figure}

\begin{proof}
  The lemma is clearly true when $n=1$.  We proceed by strong induction on $n$.  Assume that the lemma holds for all positive integers $n'<n$, and let $\pi$ be a permutation of $[n]$.
  
  If $\pi$ is the identity permutation then $I=\emptyset$ and we are done, so we may assume that $\pi$ is not the identity.  Let $i$ be the smallest index for which $\pi(i)\neq i$.  Note that $i\in\left\{2,\ldots,n-2\right\}$. Then since $\pi(j)=j$ for all $j<i$, the gap from $\pi(i-1)$ to $\pi(i)$ cannot be $-1$, $-2$, or $+1$.  It therefore must be $+2$, and we have 
  $$\pi(i)=\pi(i-1)+2=i-1+2=i+1.$$
  
  Now, the next gap, from $\pi(i)$ to $\pi(i+1)$, can either be $-1$, $+1$, or $+2$.  We claim that it is not $+1$ or $+2$.  If the gap were $+1$, then $i+1$ and $i+2$ both occur, before the value $i$ appears in the permutation.  So for some $j>i+1$, $\pi(j)=i$.  But then the value of $\pi(j+1)$ must be at least $i+3$ (since all other possible values are already used), and this contradicts $2$-boundedness.  Otherwise, if the gap between $\pi(i)$ and $\pi(i+1)$ is $+2$, so that $\pi(i+1)=i+3$, then the only way to reach $i$ in the permutation is via a $-2$ step from $i+2$, and the same argument shows a contradiction.
  
  It follows that the gap at $i$ is $-1$, so $\pi(i+1)=i$.  The only possible value for $\pi(i+2)$ is then $i+2$ (with a $+2$ step from the previous), which is on the diagonal again with all smaller numbers having occurred to the left of it.  The remaining entries form a $2$-bounded, anchored permutation of $\{i+2,i+3,\ldots,n\}$, which has a corresponding subset $I'\subseteq \{i+3,i+4,\ldots,n-2\}$ that satisfies the conditions above by the inductive hypothesis.  Since $i$ is at least $3$ less than any element of $I'$, we see that setting $I=\{i\}\cup I'$ gives a valid subset that corresponds to $\pi$.
\end{proof}

We now can prove Theorem \ref{thm:2-bounded}.

\begin{proof}
It is easily checked that $R_1=R_2=R_3=1$.  Let $n\geq 4$.  Then any anchored $2$-bounded permutation $\pi$ of $[n]$ either starts with $1,2$ or $1,3$.  In the former case, there are $R_{n-1}$ ways of completing the permutation, since any $2$-bounded way of completing it that ends at $n$ is an anchored permutation of $\{2,\ldots,n\}$.  

In the latter case, by Lemma \ref{lem:structure2}, the first four entries of the permutation must be $1,3,2,4$, and then the remaining entries starting from $4$ form $2$-bounded anchored permutation of $\{4,5,\ldots,n\}$.  It follows that there are $R_{n-3}$ possibilities if the permutation starts with $1,3$.  

It follows that $R_n=R_{n-1}+R_{n-3}$.

The generating function now follows from a straightforward calculation.  We have 
\begin{align*}
R(x)-xR(x)-x^3R(x) & = \sum_{n=1}^\infty  R_n x^n - \sum_{n=2}^\infty R_{n-1}x^n - \sum_{n=4}^\infty R_{n-3} x^n \\
 &= x+x^2+x^3-(x^2+x^3)+\sum_{n=4}^\infty (R_n-R_{n-1}-R_{n-3})x^n \\
 &= x + \sum_{n=4}^\infty 0 \cdot x^n \\
 &= x,
\end{align*}
and it follows that $R(x)=x/(1-x-x^3)$.
\end{proof}

\subsection{Combinatorial proof for \texorpdfstring{$k=3$}{k=3}}\label{sec:3-bounded}

As in Theorem \ref{thm:3-bounded}, we define $F_n$ to be the number of $3$-bounded anchored permutations of $[n]$.  In the $2$-bounded case, we saw that there is one possible pattern in which the permutations can veer from the identity, and used that to generate the recursion.  Similarly, in the $3$-bounded case, we will need to single out a certain special sequence that interferes with an otherwise regular pattern that the permutations must follow.

\begin{definition}
  The \textbf{Joker} is the sequence $3,1,4,2,5$.  We say the Joker \textbf{appears} in a $3$-bounded permutation if for some $i$, the $i$th through $(i+4)$th entries of the permutation are $i+2,i,i+3,i+1,i+4$.  
\end{definition}

\begin{figure}[t]
\begin{center}
\includegraphics{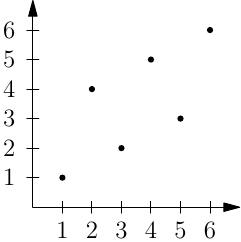}
\end{center}
\caption{\label{fig:Joker} The Joker appears in the above permutation, in its second through sixth entries.}
\end{figure}

Aside from the Joker, the $3$-bounded permutations turn out to follow a predictable pattern in terms of runs of $+3$ and $-3$ steps.  We will use this structure to devise a three-term recurrence for $F_n$.

\begin{definition}
  Define $G_n$ to be the number of $3$-bounded permutations $\pi$ of $\{1,2,\ldots,n\}$ that start with either $\pi(1)=1$ or $\pi(1)=2$ (so they are not necessarily anchored) and end at $\pi(n)=n$. 
\end{definition}

\begin{definition}
 Define $H_n$ to be the number of $3$-bounded permutations $\pi$ of $\{1,2,\ldots,n\}$ that start with $\pi(1)=3$, end with $\pi(n)=n$, and do not start with the Joker as the first five terms.
\end{definition}

We claim that for all $n\geq 6$, the sequences $F_n$, $G_n$, $H_n$ satisfy the following recurrence relations:
\begin{align*}
  F_n &= G_{n-1} + H_{n-1}+F_{n-5}, \\
  G_n &= F_n + G_{n-2} + F_{n-3} + G_{n-4} + H_{n-2}, \\
  H_n &= F_{n-3} + G_{n-3} + F_{n-4} + G_{n-5} + H_{n-3}. 
\end{align*}
  To prove these relations, we first prove the following structure lemma.

\begin{lemma}\label{lem:Joker}
Suppose $\pi$ is a $3$-bounded anchored permutation of $[n]$, and that the first $i$ entries form a $3$-bounded anchored permutation of $[i]$, so that $\pi(1)=1$, $\pi(i)=i$, and the numbers $1,\ldots,i$ comprise the first $i$ entries of the permutation in some order.  If the next step is a $+3$, then one of the following two patterns occurs starting at entry $i$: 
  \begin{enumerate}
    \item\label{option1} The Joker appears as entries $i$ through $i+4$.
    \item\label{option2} There is a positive integer $m$ and a gap $d\in \{\pm 1, \pm 2\}$ such that the sequence of gaps after $i$ is 
      $$+3,+3,\ldots,+3,d,-3,-3,\ldots,-3,\overline{d},+3,+3,\ldots,+3$$ 
    where the first run of $+3$'s has length $m$, the run of $-3$'s has length $m'$ where $m'=m-1$ if $d<0$ and $m'=m$ if $d>0$, the last run of $+3$'s has length $m'$ as well, and 
      $$\overline{d}=
      \begin{cases}
        +1, & \text{if $d=1$ or $d=-2$;} \\
        -1, & \text{if $d=2$ or $d=-1$.}
      \end{cases}$$  
    We call such a pattern a \textbf{cascading $3$-pattern}.
  \end{enumerate}
\end{lemma}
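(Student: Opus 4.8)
The plan is to read the statement as a claim about the tail of $\pi$ after entry $i$. Since the first $i$ entries use exactly the values $1,\ldots,i$, the positions $i+1,\ldots,n$ carry the values $i+1,\ldots,n$, and the tail $\pi(i+1),\ldots,\pi(n)$ is a $3$-bounded listing of $\{i+1,\ldots,n\}$ that is forced to begin with $\pi(i+1)=i+3$ (the given $+3$ step) and to end at $n$. First I would set up this reduction and then view the tail as a Hamiltonian path in the graph on $\{i+1,\ldots,n\}$ whose edges join values differing by at most $3$.

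The main tool I would record is a reachability principle: every value $v$ occurring in the tail is adjacent (in $\pi$) only to values within distance $3$ of it, so if $s$ denotes the smallest value not yet used at some stage, then $s$ can be reached only from $s+1$, $s+2$, or $s+3$. Consequently the tail cannot use up all of $s+1,s+2,s+3$ before visiting $s$, on pain of stranding $s$ and contradicting that $\pi$ is a permutation. This is what forces the permutation to turn around and descend to collect the values it skips.

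With this in hand I would first analyze the opening run of $+3$ steps. Letting $m\ge 1$ be the length of the maximal initial run of $+3$'s, the tail climbs to the value $i+3m$, having used $i,i+3,\ldots,i+3m$ and skipped exactly the pairs $\{i+3j+1,\,i+3j+2\}$ for $0\le j\le m-1$. The gap $d$ immediately after this run cannot be $+3$ by maximality, and cannot be $-3$ since its target $i+3(m-1)$ is already used; hence $d\in\{\pm 1,\pm 2\}$, as asserted.

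The heart of the argument, and the step I expect to be the main obstacle, is to show that once $d$ is taken the remaining gaps are forced into a descending run of $-3$'s, then the single turning gap $\overline{d}$, then an ascending run of $+3$'s, of the stated lengths $m'$. I would do this by a case analysis on $d\in\{+1,+2,-1,-2\}$, repeatedly applying the reachability principle to the current smallest unused value to rule out any premature ascent and to pin down each descent step, the direction of the turn $\overline{d}$, and then the final ascent. The delicate case is $m=1$ with $d=-2$: here the value $i+1$ is reached immediately, and from $i+1$ the path may legally continue either by $+1$, giving the degenerate cascade with $m'=0$, or by $+3$, and it is exactly this last branch that produces the Joker $i,i+3,i+1,i+4,i+2,i+5$. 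I would verify that these are the only two continuations in that branch and that no Joker alternative arises for the other values of $d$ or for $m\ge 2$; the displacement bookkeeping (comparing the final value $i+3m+d+\overline{d}$ with the final position) serves only as a consistency check on the run lengths rather than as an independent step.
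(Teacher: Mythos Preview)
Your plan is correct and shares the paper's essential tool, the blocking argument that the smallest unvisited value $s$ is reachable only from $s+1,s+2,s+3$. The organization, however, differs: you propose a direct case analysis on $d$ valid for every $m$ simultaneously, forcing each step of the descent, the turn, and the ascent in sequence. The paper instead inducts on $m$. Its base case $m=1$ is exactly your four-way split on $d$ (and, as you correctly isolate, the Joker arises only in the $d=-2$ branch there); for $m\ge 2$ the paper shows that $i+1$ and $i+2$ must be adjacent in $\pi$ and flanked by $i+4$ and $i+5$, then deletes the three values $i+1,i+2,i+3$ and shifts everything above down by $3$ to obtain a $3$-bounded anchored permutation with one fewer initial $+3$, which by induction is a cascade (it cannot be a Joker because the images of $i+4$ and $i+5$ are now adjacent). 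The inductive reduction is cleaner for general $m$: your direct approach will go through, but the bare smallest-unused principle does not by itself pin down each individual $-3$ step of the descent---for instance, after $d=-1$ at the value $i+3m-1$ it does not immediately exclude stepping to $i+3m-2$ rather than to $i+3m-4$---so you will need a slightly stronger stranding argument about clusters of unused pairs, which is more bookkeeping than the paper's deletion trick.
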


\begin{figure}
\begin{center}
\includegraphics{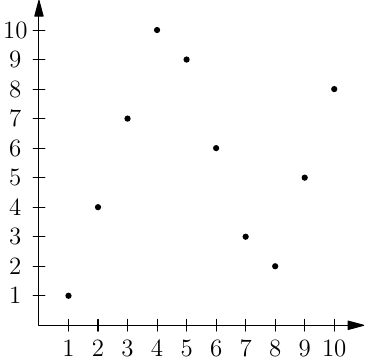}
\end{center}
\caption{\label{fig:cascading3} An example of a cascading $3$-pattern, with $m=3$ and $d=-1$.}
\end{figure}

\begin{proof}
  First, note that since $\pi$ restricts to a permutation on $\{1,\ldots,i\}$, we can assume for simplicity that $i=1$.  Now, suppose the next gap is $+3$, so $\pi(2)=4$.
  
  Let $m$ be the length of the run of consecutive gaps of $+3$ starting from $1$ before a gap $d$ not equal to $+3$ occurs.  Notice that $d$ cannot be $-3$ or else the same entry would occur twice in the permutation, and so $d\in \{\pm 1, \pm 2\}$.  We will prove that one of the two possibilities above hold by induction on $m$.
  
  \textbf{Base Case.} Suppose $m=1$.  We consider several subcases based on the value of $d$.
  
  If $d=-2$, then the first three entries of the sequence are $1,4,2$, and the next entry may be $5$ or $3$.  If the next entry is $5$ and the fifth entry is larger than $5$, then the only way to reach $3$ later in the permutation is by a gap of $-3$ from $6$, in which case we would be stuck at $3$, having used $1$, $2$, $4$, $5$, and $6$ already.  Thus, if $\pi$ starts with $1,4,2,5$ then it must continue $1,4,2,5,3,6$, which is the Joker.  Otherwise, it starts $1,4,2,3$, which is a cascading $3$-pattern for $m=1$ and $d=-2$. 
  
  If $d=-1$, suppose for contradiction that the next gap is positive, so that the first four entries are either $1,4,3,5$ or $1,4,3,6$.  Then $2$ must be reached from a gap of $-3$ from $5$, at which point the permutation is stuck.  Thus the next gap must be $-1$ as well, and the permutation must start $1,4,3,2,5$, which is a cascading $3$-pattern for $m=1$ and $d=-1$.  
  
  If $d=+1$, suppose for contradiction that the next gap is positive, so that the first four entries are either $1,4,5,6$ or $1,4,5,7$ or $1,4,5,8$.  Then to reach $2$ or $3$, there must be a gap of $-3$ from $6$, at which point the permutation is blocked by $4$, $5$, and $6$ and ends at $2$ or $3$, a contradiction.  It follows that the next gap is $-2$ or $-3$, and in fact it must be $-3$ so as to reach the entry $2$ without being blocked.  Thus, the first five entries are $1,4,5,2,3,6$, which is a cascading $3$-pattern with $m=1$ and $d=+1$.
  
  Finally, if $d=+2$, suppose for contradiction that the next gap is positive or $-1$.  Then as in the case above, the permutation becomes blocked once it reaches $2$ or $3$.  So the next gap must be $-3$ and we have $1,4,6,3$ as the first four entries.  We must then have $2$ as the fifth entry, or else the sequence would get blocked at $2$ later, so the first six entries are $1,4,6,3,2,5$, which is a cascading $3$-pattern with $m=1$ and $d=+2$.
  
  \textbf{Induction step.} Suppose $m>1$ and assume the lemma holds for $m'=m-1$.  Then $\pi$ starts with $1,4,7$.  We claim that the entries $2$ and $3$ must be adjacent in $\pi$.  Suppose they are not adjacent.  If $3$ comes first, then the only way to reach $2$ is by a $-3$ gap from $5$ (since $1$ and $4$ are already used) at which point the permutation would be stuck at $2$, a contradiction.  If $2$ comes first, then since $7$ comes after $4$ we must have reached the $2$ using a $-3$ gap from $5$.  But then the only possible entry that can follow the $2$ is $3$, and they are in fact adjacent.
  
  Now, consider the adjacent positions of the $2$ and $3$.  Then the other entry adjacent to $2$ must be $5$, and $6$ must be adjacent to $3$ as well, so the $5$ and $6$ surround the $2$ and $3$.  It follows that if we remove $2$, $3$, and $4$ from the permutation and shift all entries larger than $4$ down by $3$, we obtain a permutation $\pi'$ that starts at $1$ with a $+3$ gap to $4$ (which was the $7$ in $\pi$).  Since the $5$ and $6$ surrounded the $2$ and $3$ in $\pi$, they become $2$ and $3$ and are adjacent in $\pi'$.  All other pairs of adjacent entries in $\pi'$ still have a difference of at most $3$, because they did in $\pi$ and were both translated down by $3$.  Thus, $\pi'$ is a $3$-bounded anchored permutation starting with $m-1$ gaps of $+3$, and by the induction hypothesis it must either start with the Joker or a cascading $3$-pattern.  
  
  Since the $2$ and $3$ are adjacent in $\pi'$ it cannot start with the Joker and so it must be of the second form.  It follows that $\pi$ also starts with a cascading $3$-pattern, formed by inserting one more $+3$ and $-3$ and $+3$ into each of the runs of $3$'s that comprise the gaps of $\pi'$.
\end{proof}

We now prove each of the recurrence relations as their own lemma.

\begin{lemma}\label{lem:Fn-recurrence}
  We have $F_n = G_{n-1} + H_{n-1} + F_{n-5}$.
\end{lemma}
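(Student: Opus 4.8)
The plan is to set up a ``peel off the initial $1$'' bijection and then partition the resulting permutations by their first entry, matching the three terms on the right-hand side directly. First I would observe that in any anchored $3$-bounded permutation $\pi$ of $[n]$ we have $\pi(1)=1$ and $\pi(2)\in\{2,3,4\}$, since the first gap is positive (as $1$ is the minimal value) and at most $3$. Deleting the entry $\pi(1)=1$ and subtracting $1$ from every remaining entry produces a sequence $\tau$ on $[n-1]$; because subtracting a constant preserves all consecutive gaps, $\tau$ is $3$-bounded, it satisfies $\tau(n-1)=\pi(n)-1=n-1$, and $\tau(1)=\pi(2)-1\in\{1,2,3\}$. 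Conversely, given any $3$-bounded permutation $\tau$ of $[n-1]$ with $\tau(n-1)=n-1$ and $\tau(1)\in\{1,2,3\}$, adding $1$ to each entry and prepending a $1$ recovers an anchored $3$-bounded permutation of $[n]$, the new first gap being $\tau(1)\le 3$. Thus $F_n$ equals the number of $3$-bounded permutations of $[n-1]$ that end at $n-1$ and begin with an entry in $\{1,2,3\}$.

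Next I would split this count by the value of $\tau(1)$. If $\tau(1)\in\{1,2\}$, these permutations are, by definition, exactly the $G_{n-1}$ permutations. If $\tau(1)=3$ and $\tau$ does not begin with the Joker, they are by definition the $H_{n-1}$ permutations. The only remaining case is $\tau(1)=3$ with $\tau$ beginning with the Joker, i.e. $\tau(1),\ldots,\tau(5)=3,1,4,2,5$, and I claim these are counted by $F_{n-5}$.

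To prove the claim I would factor such a $\tau$ at its fifth entry. Since the Joker occupies the values $\{1,2,3,4,5\}$ in the first five positions and $\tau(5)=5$, the restriction of $\tau$ to positions $5,6,\ldots,n-1$ is a $3$-bounded sequence on the values $\{5,6,\ldots,n-1\}$ that starts at $5$ and ends at $n-1$; subtracting $4$ turns it into an anchored $3$-bounded permutation of $[n-5]$. This assignment is a bijection: in the reverse direction one takes an anchored permutation of $[n-5]$, adds $4$ to every entry, and glues the block $3,1,4,2$ in front of its leading $5$. The only point needing verification is $3$-boundedness across the seam, which is immediate because the four Joker gaps $-2,+3,-2,+3$ all lie in $[-3,3]$ and the gap out of the value $5$ is untouched by the surgery. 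Combining the three cases gives $F_n=G_{n-1}+H_{n-1}+F_{n-5}$.

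The computations here are all routine; the step requiring the most care is the final factorization, where I must confirm both that a Joker-initial permutation genuinely decouples into the fixed block $3,1,4,2,5$ plus an independent anchored permutation on the higher values, and that no $3$-boundedness is lost or gained at the value $5$ where the two pieces meet. It is worth emphasizing that, unlike the recurrences I expect for $G_n$ and $H_n$, this argument does not invoke the cascading-$3$-pattern dichotomy of Lemma~\ref{lem:Joker}: the entire decomposition follows from the definitions of $G_n$ and $H_n$ together with the two elementary peel-and-factor bijections described above.
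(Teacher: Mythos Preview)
Your proof is correct and follows essentially the same decomposition as the paper: both arguments remove the leading $1$ and split according to whether $\pi(2)$ is $2$ or $3$ (giving $G_{n-1}$), or $\pi(2)=4$ (giving $H_{n-1}+F_{n-5}$ according to whether the Joker appears). Your observation in the last paragraph is a genuine, if minor, improvement: the paper invokes Lemma~\ref{lem:Joker} to say that a $+3$ start forces either the Joker or a cascading $3$-pattern, but as you note this dichotomy is unnecessary here, since $H_{n-1}$ is \emph{defined} as ``starts at $3$, ends at $n-1$, not Joker-initial,'' so the split into Joker versus non-Joker is tautological and requires no structural lemma.
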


\begin{proof}
  Any $3$-bounded anchored permutation either starts with a gap of $+1$, $+2$, or $+3$.  If it starts with $+1$ or $+2$, together the number of possibilities are equal to the number of $3$-bounded permutations of $\{2,\ldots,n\}$ that start with either $2$ or $3$, which is exactly $G_{n-1}$.
  
  If it starts with $+3$, then by Lemma \ref{lem:Joker} it either starts with the Joker sequence or is a cascading $3$-pattern.  If it starts with the Joker, then $\pi(6)=6$ and the first six entries are a permutation of $[6]$, so the entries after the fifth form a $3$-bounded anchored permutation of $\{6,7,\ldots,n\}$.  There are therefore $F_{n-5}$ possibilities in this case.  Otherwise, the number of possibilities is equal to the number of $3$-bounded permutations of $\{2,\ldots,n\}$ that start with $4$ and end at $n$ but do not start with the Joker, which is exactly $H_{n-1}$.  The recursion follows.
\end{proof}

\begin{lemma}\label{lem:Gn-recurrence}
  We have $G_n=F_n + G_{n-2} + F_{n-3} + G_{n-4} + H_{n-2}$.
\end{lemma}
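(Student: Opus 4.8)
The plan is to write $G_n = F_n + A_n$, where $A_n$ counts the $3$-bounded permutations of $[n]$ with $\pi(1)=2$ and $\pi(n)=n$; the permutations with $\pi(1)=1$ are exactly the anchored ones, so they contribute the $F_n$ term immediately, and it suffices to prove $A_n = G_{n-2} + F_{n-3} + G_{n-4} + H_{n-2}$. The key structural observation is that the value $1$ is adjacent only to values in $\{2,3,4\}$, while the leading $2$ has its only neighbor at position $2$; hence whenever $\pi(2)\neq 1$ the value $1$ is interior with its two neighbors equal to $3$ and $4$. I would partition the permutations counted by $A_n$ according to $\pi(2)\in\{1,3,4,5\}$ and match each class to exactly one term of the recurrence.

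For $\pi(2)=1$, deleting the prefix $2,1$ and subtracting $2$ from every remaining entry is a bijection onto the permutations of $[n-2]$ counted by $G_{n-2}$, since the new first entry is $\pi(3)-2\in\{1,2\}$ (because $\pi(3)\in\{3,4\}$). For $\pi(2)=3$, the adjacency constraints on $1$ force the prefix $2,3,1,4$, after which the entries from the $4$ onward form an anchored permutation of $\{4,\dots,n\}$, giving $F_{n-3}$. For $\pi(2)=4$, they force the prefix $2,4,1,3$; deleting this prefix and subtracting $4$ yields a permutation of $[n-4]$ whose first entry lies in $\{1,2\}$ (as $\pi(5)\in\{5,6\}$), giving $G_{n-4}$. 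Each of these is a clean deletion-and-relabel bijection whose inverse (prepending the appropriate block and verifying the single junction gap) is routine.

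The subtle case is $\pi(2)=5$, which I claim is counted by $H_{n-2}$. Here I would delete both the leading $2$ and the interior value $1$, joining its neighbors $3$ and $4$ (whose difference is $1$), and then subtract $2$ from each surviving entry. The result is a $3$-bounded permutation of $[n-2]$ starting at $3$ and ending at $n-2$ in which the values $1$ and $2$ are adjacent, being the images of the former $3,4$; since the Joker $3,1,4,2,5$ has its $1$ and $2$ non-adjacent, the image never begins with the Joker and so is counted by $H_{n-2}$. Surjectivity is where the real work lies: I must show that every permutation counted by $H_{n-2}$ has its $1$ and $2$ adjacent, so that the inverse map (relabel by $+2$, insert a $1$ between the adjacent entries coming from $1,2$, and prepend a $2$) is well defined. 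This is the main obstacle.

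That surjectivity reduces to the structural claim that a $3$-bounded permutation of $[m]$ with $\sigma(1)=3$, $\sigma(m)=m$, and $1,2$ non-adjacent must begin with the Joker, which I would prove by forcing. Non-adjacency makes $1$ interior with neighbors exactly $3$ and $4$; since the front entry $3$ has a single neighbor, that neighbor is $1$, so $\sigma = 3,1,4,\dots$. Then $2$ is interior and cannot touch $1$ or the already-saturated $3$, so its neighbors are exactly $4$ and $5$; as the $4$ in position $3$ has its only free neighbor at position $4$, this forces $\sigma(4)=2$ and then $\sigma(5)=5$, i.e.\ $\sigma$ begins $3,1,4,2,5$. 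Contrapositively, every $H_m$ permutation has $1$ and $2$ adjacent, which completes the $\pi(2)=5$ bijection. Since the four cases are disjoint and exhaust $A_n$, summing the counts gives $A_n = G_{n-2}+F_{n-3}+G_{n-4}+H_{n-2}$, and hence $G_n = F_n + G_{n-2}+F_{n-3}+G_{n-4}+H_{n-2}$.
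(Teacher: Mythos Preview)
Your proof is correct and follows the same case decomposition as the paper: split off the $F_n$ permutations starting at $1$, then partition those starting at $2$ by the value of $\pi(2)\in\{1,3,4,5\}$, matching the four cases to $G_{n-2}$, $F_{n-3}$, $G_{n-4}$, and $H_{n-2}$ respectively. The one substantive difference is how you establish surjectivity in the $\pi(2)=5$ case. The paper removes the $1$, observes that the result is an anchored permutation of $\{2,\dots,n\}$ with a leading $+3$ gap, and then invokes the structural Lemma~\ref{lem:Joker} (cascading $3$-patterns) to conclude that the $3$ and $4$ are adjacent whenever the Joker is avoided. You instead give a short, self-contained forcing argument directly on the $H_{n-2}$ side: if $1$ and $2$ are non-adjacent then the neighbor constraints pin down $3,1,4,2,5$ entry by entry. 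Your route is more elementary here and makes this lemma independent of Lemma~\ref{lem:Joker}; the paper's route reuses machinery it has already built. The arguments for $\pi(2)\in\{3,4\}$ are also phrased slightly differently (you use the ``neighbors of $1$'' observation uniformly, while the paper argues by showing the permutation would otherwise get stuck), but these lead to the same forced prefixes.
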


\begin{proof}
  We now wish to enumerate the $3$-bounded permutations that start at either $1$ or $2$ and end at $n$.  The number starting at $1$ is $F_n$, which is the first term in the recurrence.  
  
  For those starting at $2$, if the next entry is $1$ then the third entry can either be $3$ or $4$.  We now wish to count $3$-bounded permutations of $\{3,\ldots,n\}$ that start at either $3$ or $4$ and end at $n$, which is exactly $G_{n-2}$. 
    
  If the first two entries are $2,3$, then if the next gap is positive it follows that the $1$ can only be reached by a gap of $-3$ from $4$, at which point the permutation is stuck.  It follows that the next gap is negative, and it must be a gap of $-2$.  So the first four entries are $2,3,1,4$, and the remaining entries starting from $4$ form a $3$-bounded anchored permutation of $\{4,\ldots,n\}$. Thus, there are $F_{n-3}$ possibilities in this case.
    
  If the first two entries are $2,4$, then $1$ can either be reached from a gap of $-3$ from $4$, or later from a gap of $-2$ from $3$.  But the latter option becomes stuck at $1$, and so there must be a gap of $-3$ from $4$ to $1$.  It follows that the permutation starts $2,4,1,3$ and then continues with a $3$-bounded permutation of $\{5,\ldots,n\}$ that starts at either $5$ or $6$.  There are therefore $G_{n-4}$ such possibilities.
    
  Finally, if the first two entries are $2,5$, then the $1$ must occur at some point in $\pi$ and must be surrounded by $3$ and $4$.  If we remove the $1$, then, we obtain a $3$-bounded permutation of $\{2,\ldots,n\}$ starting at $2$ and with a starting gap of $+3$, with the $3$ and $4$ adjacent.  By Lemma \ref{lem:Joker}, the $3$ and $4$ will always be adjacent in such a permutation with a starting gap of $+3$ unless it starts with the Joker pattern, and so, removing the $1$ and the $2$, we see that there are exactly $H_{n-2}$ possibilities in this case.
\end{proof}

Notice that the final step in the above proof was analogous to the final step of the proof of Lemma \ref{lem:Fn-recurrence}.  Deleting the $1$ from the permutation resulted in the $H_{n-1}$ term in the $F_n$ recurrence, just as deleting the $1$ and the $2$ from the permutation resulted in the $H_{n-2}$ term in the $G_n$ recurrence.  We will use this trick once more below, deleting the $1$, $2$, and $3$, resulting in a $H_{n-3}$ term in the $H_n$ recurrence.

\begin{lemma}\label{lem:Hn-recurrence}
  We have $H_n=F_{n-3} + G_{n-3} + F_{n-4} + G_{n-5} + H_{n-3}$.
\end{lemma}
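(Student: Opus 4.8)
The plan is to condition on the second entry $\pi(2)$ of a permutation $\pi$ counted by $H_n$, exactly paralleling the proofs of Lemmas \ref{lem:Fn-recurrence} and \ref{lem:Gn-recurrence}. Since $\pi(1)=3$ and $|\pi(2)-3|\le 3$ with $\pi(2)$ a positive integer different from $3$, the only possibilities are $\pi(2)\in\{1,2,4,5,6\}$, and I claim these five cases yield the five summands $G_{n-3}$, $F_{n-3}$, $F_{n-4}$, $G_{n-5}$, and $H_{n-3}$ in turn. For each case I would first use the ``stuck at a small value'' arguments already employed above to force a short initial segment, and then read off the count of the remaining tail after deleting the forced small values and shifting down.

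For the first four cases the forcing is a routine extension of the earlier casework. If $\pi(2)=1$, then $\pi(3)\in\{2,4\}$, but $3,1,4$ forces the Joker and is excluded, so $\pi$ begins $3,1,2$; the entries from the $2$ onward are then a $3$-bounded permutation of $\{4,\ldots,n\}$ beginning at $4$ or $5$, which after deleting $1,2,3$ is counted by $G_{n-3}$. If $\pi(2)=2$, placing $1$ without becoming stuck forces the prefix $3,2,1,4$, leaving an anchored permutation of $\{4,\ldots,n\}$ and giving $F_{n-3}$. If $\pi(2)=4$, the fact that the only available neighbors of $1$ are $2$ and $4$ forces $3,4,1,2$ and then $\pi(5)=5$, leaving an anchored permutation of $\{5,\ldots,n\}$ and giving $F_{n-4}$. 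If $\pi(2)=5$, the analogous analysis of $1$ and $2$ forces $3,5,2,1,4$, after which the tail is a $3$-bounded permutation of $\{6,\ldots,n\}$ beginning at $6$ or $7$, giving $G_{n-5}$. In each case the map is a bijection onto the stated family, since the forced prefix can be prepended to any tail of the correct type, and the result begins $3,1$, $3,2$, $3,4$, or $3,5$ and hence is never Joker-headed.

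The case $\pi(2)=6$, in which the first gap is $+3$, is the one I expect to be the crux, and it is the step the surrounding discussion flags as producing $H_{n-3}$ by deleting $1$, $2$, and $3$. Here I would argue, as in the final step of Lemma \ref{lem:Gn-recurrence}, that $1$ and $2$ must occur as an adjacent pair flanked by $4$ and $5$; then, deleting $1$ and $2$ and relabeling so that the permutation starts at $1$, Lemma \ref{lem:Joker} forces it to be either a Joker or a cascading $3$-pattern. The adjacency of the old $4$ and $5$ that results from the deletion is exactly what excludes the Joker, so we are in the cascading case, and deleting the leading $3$ and shifting down by $3$ produces a permutation counted by $H_{n-3}$. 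Conversely, any permutation counted by $H_{n-3}$ has its values $1$ and $2$ adjacent (otherwise $1$ would be flanked by $3$ and $4$, forcing a Joker and contradicting membership in $H$), so the block can be reinserted uniquely and the correspondence is a bijection.

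Summing the five cases gives $H_n=F_{n-3}+G_{n-3}+F_{n-4}+G_{n-5}+H_{n-3}$, as claimed. Beyond the routine stuck-value bookkeeping, the delicate points I anticipate are verifying for all $n$ that $\pi(2)=6$ genuinely forces $1$ and $2$ into the flanked adjacent block, and confirming that the Joker is excluded on precisely the correct side of the $H_{n-3}$ bijection so that no spurious Joker contributions (of the kind that produced the $F_{n-5}$ term in Lemma \ref{lem:Fn-recurrence}) slip in.
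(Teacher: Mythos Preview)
Your proof is correct and uses the same decomposition as the paper: split on $\pi(2)\in\{1,2,4,5,6\}$ and show the five cases contribute $G_{n-3}$, $F_{n-3}$, $F_{n-4}$, $G_{n-5}$, $H_{n-3}$ respectively, with the last case handled by the delete-and-shift bijection. The one technical difference worth knowing is that the paper avoids the separate stuck-value analyses for the first four cases by prepending a $0$ to $\pi$: this produces a $3$-bounded anchored permutation of $\{0,1,\ldots,n\}$ whose first gap is $+3$, so Lemma~\ref{lem:Joker} applies directly and (since the Joker is excluded by the definition of $H_n$) forces a cascading $3$-pattern from the outset. That single observation immediately yields the forced prefixes $3,1,2$; $3,2,1,4$; $3,4,1,2,5$; $3,5,2,1,4$ and, in the $\pi(2)=6$ case, the adjacency of $1$ and $2$---exactly the facts you establish case by case. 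Your route is slightly more hands-on but equally valid, and your verification that the $H_{n-3}$ side of the bijection never starts with the Joker (because $1$ and $2$ must be adjacent there) is the same as the paper's.
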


\begin{proof}
  We wish to enumerate the $3$-bounded permutations that start at $3$ and end at $n$ but do not start with the Joker sequence $3,1,4,2,5$.  The second entry can either be $1$, $2$, $4$, $5$, or $6$.
   
  Notice that if we add a $0$ to the front of the permutation, we will get a $3$-bounded anchored permutation of $\{0,\ldots,n\}$ that starts with a gap of $+3$.  By Lemma \ref{lem:Joker}, since the permutation does not start with the Joker, it must start with a cascading $3$-pattern.  
   
  Thus, if the first gap after the $3$ is not $+3$, then $d$ is determined and the $3$-pattern is determined as well.  In particular, if the first two entries are $3,1$ then the permutation must start with $3,1,2$, and so the entries after the third form a $3$-bounded permutation of $\{4,\ldots,n\}$ that starts at either $4$ or $5$ and ends at $n$.  There are exactly $G_{n-3}$ such entries in this case.
   
  If the first two entries are $3,2$ then since the start is a cascading $3$-pattern, the first four entries are $3,2,1,4$.  The entries starting at $4$ form a $3$-bounded permutation of $\{4,\ldots,n\}$ starting at $4$ and ending at $n$, giving us $F_{n-3}$ more possibilities.
   
  If the first two entries are $3,4$, then by the cascading $3$-pattern the first five entries are $3,4,1,2,5$.  The entries starting at $5$ form a $3$-bounded permutation of $\{5,\ldots,n\}$ starting at $5$ and ending at $n$, giving us $F_{n-4}$ more possibilities.
   
  If the first two entries are $3,5$, the cascading $3$-pattern tells us that the first five entries are $3,5,2,1,4$, with the next entry either $6$ or $7$.  The entries starting after the fifth form a $3$-bounded permutation of $\{6,\ldots,n\}$ starting at either $6$ or $7$ and ending at $n$, giving us $G_{n-5}$ more possibilities.
   
  Finally, if the first two entries are $3,6$, then since it is a cascading $3$-pattern the $1$ and $2$ must be adjacent in $\pi$.  Removing the $1$, $2$, and $3$ then gives a $3$-bounded permutation of $\{4,\ldots,n\}$ that starts at $6$ and ends at $n$ but avoids the Joker.  There are $H_{n-3}$ such possibilities, and the proof is complete.
\end{proof}

We can now eliminate $H_n$ from these recurrences to form a two-term recurrence.  Putting $n-1$ in the recurrence for $G_n$, we have $G_{n-1}=F_{n-1}+G_{n-3} + F_{n-4} + G_{n-5} + H_{n-3}$, which nearly matches the recurrence for $H_n$.  From this we conclude $H_n=F_{n-3}+G_{n-1}-F_{n-1}$.  We can now substitute for the $H$ terms in the $F$ and $G$ recurrences to obtain the following relationships:
  \begin{align}
    F_n &=G_{n-1}+F_{n-4}+G_{n-2}-F_{n-2}+F_{n-5}, \label{eqn:recursionF}\\
    G_n &=F_n+G_{n-2}+G_{n-3}+G_{n-4}+F_{n-5}.  \label{eqn:recursionG}
  \end{align}
Notice that our proofs above actually show that these recursions hold for all $n$, even $n\leq 5$, where we set $F_j=G_j=0$ for any $j\leq 0$.  Thus, we can unwind the recursions to find the first few values of $F_n$ and $G_n$, as follows.

\begin{center}
 \begin{tabular}{c|cccccccc}
    $n$ & $1$ & $2$ & $3$ & $4$ & $5$ & $6$ & $7$ & $8$  \\ \hline
     $F_n$ & $1$ & $1$ & $1$ & $2$ & $6$ & $14$ & $28$ & $56$  \\
     $G_n$ & $1$ & $1$ & $2$ & $4$ & $10$ & $22$ & $45$ & $93$ \\
  \end{tabular}
\end{center}

We now have the tools to prove Theorem \ref{thm:3-bounded}.

\begin{proof}
  We first find the generating function for $\{F_n\}$, and use this to find the single-term recurrence for the sequence.

  Let $F(x)=\sum_{n=1}^\infty F_n x^n$ and $G(x)=\sum_{n=1}^\infty G_n x^n$.  Then we have 
    \begin{align*}
      F(x) & = x+x^2+x^3+2x^4+6x^5 + \sum_{n=6}^\infty F_n x^n \\
      x^2F(x)&=\hspace{1.6cm}x^3+\phantom{2}x^4+\phantom{6}x^5 + \sum_{n=6}^\infty F_{n-2} x^n \\
      x^4F(x)&=\phantom{x+x^2+x^3+2x^4+6}x^5+  \sum_{n=6}^\infty F_{n-4} x^n \\
      x^5F(x) &=\hspace{4.8cm} \sum_{n=6}^\infty F_{n-5} x^n,
    \end{align*}
  and 
    \begin{align*}
      G(x) & = x+x^2+2x^3+4x^4+10x^5 + \sum_{n=6}^\infty G_n x^n \\
      xG(x)& = \hspace{0.75cm}x^2+\phantom{2}x^3+2x^4+\phantom{1}4x^5 + \sum_{n=6}^\infty G_{n-1} x^n \\
      x^2G(x)&=\phantom{x+x^2+2} x^3+\phantom{2}x^4+\phantom{1}2x^5 + \sum_{n=6}^\infty G_{n-2} x^n \\
      x^3G(x)&=\hspace{3cm}x^4+\phantom{10}x^5 + \sum_{n=6}^\infty G_{n-3} x^n \\
      x^4G(x)&=\hspace{4.3cm}x^5+  \sum_{n=6}^\infty G_{n-4} x^n. \\
    \end{align*}

  We can now utilize the recursions (\ref{eqn:recursionF}) and (\ref{eqn:recursionG}) to make the infinite summations cancel and keep track of the smaller terms, obtaining the following two equations:
    \begin{align*}
      F(x)-xG(x)-x^2G(x)+x^2F(x)-x^4F(x)-x^5F(x)&=x,\\
      G(x)-F(x)-x^2G(x)-x^3G(x)-x^4G(x)-x^5F(x)&=0.
    \end{align*}
  Solving these two equations for $F(x)$ and $G(x)$ gives us that $$F(x)=\frac{x-x^2-x^4}{1-2x+x^2-2x^3-x^4-x^5+x^7+x^8}.$$  Finally, we can multiply both sides of the above relation by the denominator of the fraction, and we find that for $n\geq 8$, $F_n$ satisfies the recursion 
    $$F_n=2F_{n-1}-F_{n-2}+2F_{n-3}+F_{n-4}+F_{n-5}-F_{n-7}-F_{n-8},$$ 
  as desired.
\end{proof}

\section{Asymptotic bounds}\label{sec:asymptotics}

We now establish asymptotic bounds for $F_n^{(k)}$.  To do so, recall that a directed graph is \textbf{strongly connected} if there is a directed path from any vertex to any other vertex.   We will make use of known results on the spectra of the adjacency matrix of strongly connected directed graphs.  As $\mathcal{P}_{2k+1,k}$ itself is not strongly connected, we begin by restricting to a certain strongly connected component of $\mathcal{P}_{2k+1,k}$ before applying the transfer-matrix method.

\begin{definition}
  Define $U_{2k+1,k}$ (resp.\ $W_{2k+1,k}$) to be the set of vertices in $\mathcal{P}_{2k+1,k}$ whose pattern starts with 1 (resp.\ ends with $2k+1$).
\end{definition}

By Theorem \ref{thm:anchored}, the sets $U_{2k+1,k}$ and $W_{2k+1,k}$ are the sets of possible starting and ending vertices for a path in $\mathcal{P}_{2k+1,k}$ that determines an anchored $(k+1)$-determined permutation. 

\begin{definition}
  Let $V'_{2k+1,k}$ denote the set of vertices $v$ in $\mathcal{P}_{2k+1,k}$ for which there exists a directed path containing $v$ that starts at some vertex $u\in U_{2k+1,k}$ and ends at some vertex $w\in W_{2k+1,k}$.   Then we define $\mathcal{P}'_{2k+1,k}$ to be the vertex-induced subgraph of $\mathcal{P}_{2k+1,k}$ on the vertices $V'_{2k+1,k}$.
\end{definition}

\begin{lemma}\label{lem:strongly-connected}
  The graph $\mathcal{P}'_{2k+1,k}$ is strongly connected, and moreover, it is the strongly connected component in $\mathcal{P}_{2k+1,k}$ of the vertex labeled by the identity permutation $\mathrm{id}_{[2k+1]}\in S_{2k+1}$.
\end{lemma}

\begin{proof}
  Let $v\in V'_{2k+1,k}$ be a vertex in $\mathcal{P}'_{2k+1,k}$.  Then there exists $u\in U_{2k+1,k}$ and $w\in W_{2k+1,k}$ for which there is a directed path $u\to v\to w$ in $\mathcal{P}_{2k+1,k}$.  
  
  Let $e$ denote the identity vertex $\mathrm{id}_{[2k+1]}$.  We first show that there are directed paths $e\to v$ and $v\to e$ in $\mathcal{P}_{2k+1,k}$.  
  To show that there is a directed path $e\to v$, it suffices to construct a path $e\to u$, since there is a path $u\to v$.  By the definition of $U_{2k+1,k}$, we have that $u=u_1\cdots u_{2k+1}$ is a permutation pattern having $u_1=1$, and the inverse of $u$ is $k$-bounded since it is a non-prohibited pattern.  We can therefore define the permutation $p=p_1\cdots p_{4k+2}$ by $$p_i=\begin{cases} i, & i\le 2k+1; \\ u_{i-2k-1}+2k+1, & i> 2k+1.\end{cases}$$
  In other words, we increase each of the entries of $u$ by $2k+1$ and append the result to $e$, to form the permutation $p$ of length $4k+2$.  
  
  Note that $2k+1$ and $2k+2$ are adjacent in $p$, since $u_1=1$ by assumption.  Furthermore, $e$ and $u$ both have $k$-bounded inverses.  Thus, no pair of consecutive numbers $i,i+1$ occur more than a distance of $k$ apart in $p$.  Hence $p^{-1}$ is $k$-bounded as well, and so $p$ is $(k+1)$-determined by Lemma \ref{lem:k-determined}.  It follows that there is a path in $\mathcal{P}_{2k+1,k}$ giving the consecutive patterns of $p$, and this path starts at $e$ and ends at $u$, as desired.
  
  To show there is a path from $v$ to $e$, it similarly suffices to find a path from $w$ to $e$.  Since $w$ ends with $2k+1$, we can add $2k+1$ to each entry of $e$ and append it to the end of $w$ to form a longer permutation, and a similar argument as above gives the desired path from $w$ to $e$ in $\mathcal{P}_{2k+1,k}$.
  
  Note that the directed paths $v\to e$ and $e\to v$ lie entirely in $\mathcal{P}'_{2k+1,k}$ (since $e\in U_{2k+1,k}$ and $e\in W_{2k+1,k}$). Thus, if $v$ and $t$ are any two vertices in $\mathcal{P}'_{2k+1,k}$, they are connected by a directed path $v\to e \to t$.  Hence $\mathcal{P}'_{2k+1,k}$ is strongly connected.
  
  Finally, given any vertex $v$ in the strongly connected component of $e$ in $\mathcal{P}_{2k+1,k}$, some path $e\to v\to e$ exists by the definition of strongly connected.  Hence $v\in \mathcal{P}'_{2k+1,k}$ since $e\in U_{2k+1,k}\cap W_{2k+1,k}$.  It follows that $\mathcal{P}'_{2k+1,k}$ is indeed equal to the strongly connected component of $e$.
\end{proof}

Now, let $A'$ be the adjacency matrix of $\mathcal{P'}_{2k+1,k}$.   By the transfer-matrix method and Theorem \ref{thm:anchored}, the denominator of one rational expression for the generating function $$F^{(k)}(x):=\sum F^{(k)}_n x^n$$ is given by $\det(I-xA')$.  The reversed polynomial of $\det(I-xA')$ is simply the characteristic polynomial $\det(A'-xI)$ of $A'$, whose roots are given by the eigenvalues of $A'$.  

Using the well-known Perron-Frobenius theorem, we will show that $A'$ has a unique eigenvalue $r$ with largest absolute value, and that $r$ is a positive real number with multiplicity $1$.  Then, by considering the partial fraction decomposition of the rational generating function (after factoring the denominator completely over $\mathbb{C}$), we will see that $F^{(k)}_n$ is bounded above by $c\cdot r^n$ for some constant $c>0$ for sufficiently large $n$.

The ``nonnegative'' version of the Perron-Frobenius theorem can be stated as follows.  Recall that the \textbf{spectral radius} of a matrix is the largest absolute value of any complex eigenvalue.  The following theorem summarizes several of the results described in Section 8.4 of \cite{HornJohnson}.

\begin{theorem}[Perron-Frobenius]
  Let $A$ be a matrix having nonnegative real entries, such that for any $i$ and $j$ there is some power $A^n$ for which $A^n_{ij}>0$.  Then there is a unique positive real eigenvalue, of multiplicity one, equal to the spectral radius of $A$.  
  
  Moreover, the number of eigenvalues of $A$ equal to the spectral radius is equal to the \textbf{period} of $A$, defined as greatest common divisor of all exponents $n$ for which some diagonal entry $(A^n)_{i,i}$ is nonzero.
\end{theorem}

Note that the condition on $A$ in the theorem above, for an adjacency matrix of a directed graph, is precisely equivalent to the graph being strongly connected.  

Lemma \ref{lem:strongly-connected} therefore allows us to apply the Perron-Frobenius theorem to the adjacency matrix $A'$ of the strongly connected graph $\mathcal{P}'_{2k+1,k}$.  In particular, note that if $i$ is the index of the identity vertex $e$, we see that $A_{i,i}'=1$ since there is a directed edge from $e$ to itself.  Hence the period of $A'$ is $1$, and so there is a unique maximal eigenvalue $r$, and this eigenvalue is positive and real of multiplicity one.

 We now wish to find an upper bound for $r$.  Frobenius obtained classical bounds for the spectral radius of an adjacency matrix of a strongly connected digraph.  In particular, it must be less than or equal to the maximum outdegree of the vertices.  (See Theorem 2.1 in the survey paper \cite{Brualdi}.)

Therefore, to prove Theorem \ref{thm:asymptotics}, it only remains to show that $\mathcal{P'}_{2k+1,k}$ has a maximum outdegree of $k$. 

\begin{lemma}\label{lem:outdegree}
  The largest outdegree of any vertex in $\mathcal{P'}_{2k+1,k}$ is $k$.
\end{lemma}

\begin{proof}
Let $v=v_1\cdots v_{2k+1}$ be a vertex in $\mathcal{P'}_{2k+1,k}$.  Let $$a_1a_2\cdots a_kb_1b_2\cdots b_k $$ be the permutation pattern of $v_2v_3\cdots v_{2k+1}$.  

In the larger graph $\mathcal{P}_{2k+1}$, the vertex $v$  has outdegree of exactly $2k+1$ since any number $m\in \{1,2,\ldots ,2k+1\}$ can be appended to the pattern $a_1\cdots a_kb_1\cdots b_k$, incrementing all entries $a_i\ge m$ or $b_j\ge m$ by $1$, in order to form a vertex that $v$ points to.  However, at least $k+1$ of the values of $m$ lead to $k$-prohibited permutation patterns.  Specifically, if we insert any of $a_1,a_2,\ldots ,a_k,$ or $1+\text{max}\left\lbrace a_1,a_2,\ldots, a_k\right\rbrace$ as the last entry, we will create a $k$-prohibited pattern, and the above listed numbers are distinct.  Thus, the maximum outdegree in the smaller graph $\mathcal{P'}_{2k+1,k}$ is no greater than $(2k+1)-(k+1)=k$.  

Finally, we show that the identity pattern $e=1,2,3,\ldots,2k+1$ has outdegree exactly $k$.  Indeed, consider the permutations of length $2k+2$ of the form $$1,2,3,\ldots,i-1,i+1,\ldots,2k+2,i.$$  Such a permutation is $(k+1)$-determined if and only if $i\in \{k+3,\ldots,2k+2\}$, and each of these $k$ permutations gives a directed edge $e\to v$ for some pattern $v$.  Thus $e$ has outdegree $k$ in $\mathcal{P}_{2k+1,k}$.  To see that each of these vertices $v$ is also in $\mathcal{P}'_{2k+1,k}$, note that we may extend the permutation above to $$1,2,3,\ldots,i-1,i+1,\ldots,2k+2,i,2k+3,2k+4,\ldots,4k+3,$$ which is $(k+1)$-determined and therefore gives a path from $v$ back to the identity in $\mathcal{P}_{2k+1,k}$.  Hence $v\in \mathcal{P}'_{2k+1,k}$ as desired.
\end{proof}

Thus, we have $r\leq k$, and the proof of Theorem \ref{thm:asymptotics} is complete.

\begin{remark}
In the cases $k=2$ and $k=3$, we can directly factor the (degree-reversed) denominators of the rational generating functions stated in Theorems \ref{thm:2-bounded} and \ref{thm:3-bounded} to find that the largest roots are approximately $1.466$ and $2.114$ respectively, when rounded up to three decimal places.  Thus $R_n=O(1.466^n)$ and $F_n=O(2.114^n)$, which agrees with the upper bounds of $O(2^n)$ and $O(3^n)$ given by Theorem \ref{thm:asymptotics}.
\end{remark}

\begin{remark}
  A lower bound for $F_n^{(k)}$, similar to that in \cite{AK} for the non-anchored case, may be obtained as follows.  Consider a permutation $\pi:[n]\to [n]$ with the following properties:
  \begin{enumerate}
      \item The map $\pi$ fixes the elements $1,k+2,2k+3,\ldots$, namely those of the form $(k+1)a+1$ for some nonnegative integer $a$.
      \item It restricts to a permutation on each of the consecutive blocks of elements $$\{2,3,4,\ldots,k+1\},\{k+3,k+4,\ldots,2k+2\},\ldots ,\{(k+1)(b-1)+1,\ldots,(k+1)b\} $$  where $(k+1)b+1$ is the largest fixed point of the form described above.
      \item The remaining elements $(k+1)b+1,(k+1)b+2,\ldots,n$ are also fixed by $\pi$.
   \end{enumerate}
   Then $\pi^{-1}$ is $k$-bounded.  Since the second property above gives $k!$ options for the permutations on each block, and there are $\lfloor \frac{n-1}{k+1} \rfloor$ such blocks, we have that there are at least $$(k!)^{\lfloor (n-1)/(k+1) \rfloor}$$ $k$-bounded permutations of length $n$.
\end{remark}
\section{Acknowledgments}

Thanks to the anonymous referee of a previous version of this paper for providing constructive feedback.  Thanks also to Michael Albert for sharing some additional thoughts pertaining to this topic.

\end{document}